\documentclass[dvipsnames,reqno]{amsart}
\usepackage{fullpage}
\usepackage{todonotes}
\usepackage[all,2cell]{xy}
\usepackage{graphicx, amsmath, amssymb, amsthm}
\usepackage{newclude}
\usepackage{mathrsfs}
\usepackage[hidelinks,pagebackref=true]{hyperref}
\usepackage{sseq}
\usepackage{backref}
\usepackage{eucal}
\usepackage{multirow,bigdelim}
\usepackage{array}


\let\tops=\texorpdfstring

\newcommand{\cF}{\mathcal F}

\newcommand{\TPhi}{\operatorname{T\Phi}}

\newcommand{\BB}{\mathcal A}



\newcommand{\<}{\langle}
\let\>=\undefined
\newcommand{\>}{\rangle}

\newcommand{\lf}{\left\lfloor}
\newcommand{\rf}{\right\rfloor}

\newcommand{\defeq}{\mathrel{:=}}
\newcommand{\tnsr}{\otimes}

\newcommand{\iso}{\morph^\sim}

\newcommand{\bdry}{\partial}

\newcommand{\psr}[1]{[\![#1]\!]}

\DeclareMathOperator{\Tor}{Tor}

\newcommand{\morph}{\mathop{\longrightarrow}\limits}

\newcommand{\xra}{\xrightarrow}

\renewcommand{\projlim}{\mathop{\operatorname*{lim}\limits_{\longleftarrow}}\limits}
\renewcommand{\lim}{\projlim}

\DeclareMathOperator{\tr}{tr}

\newcommand{\res}{\mathrm{res}}

\newcommand{\C}{\mathbf C}
\newcommand{\F}{\mathbf F}
\newcommand{\N}{\mathbf N}
\renewcommand{\O}{\mathcal O}

\let\sec=\S
\renewcommand{\S}{\mathbf S}

\newcommand{\Z}{\mathbf Z}


\newcommand{\K}{{\mathrm{K}}}

\newcommand{\THH}{\mathrm{THH}}
\newcommand{\TR}{\mathrm{TR}}
\newcommand{\TC}{\mathrm{TC}}
\newcommand{\TCmin}{\mathrm{TC}^-}
\newcommand{\TP}{\mathrm{TP}}
\newcommand{\TF}{\mathrm{TF}}

\newcommand{\T}{\mathbb T}

\newcommand{\Ainf}{{\mathbf{A}_{\mathrm{inf}}}}

\usepackage{relsize}
\usepackage[bbgreekl]{mathbbol}
\DeclareSymbolFontAlphabet{\mathbb}{AMSb} 
\DeclareSymbolFontAlphabet{\mathbbl}{bbold}

\newcommand{\prism}{{\mathlarger{\mathbbl{\Delta}}}} 


\newcommand{\RO}{\mathrm{RO}}
\newcommand{\RU}{\mathrm{RU}}
\newcommand{\JO}{\mathrm{JO}}
\newcommand{\Pic}{\operatorname{Pic}}
\newcommand{\m}{\underline}
\newcommand{\mpi}{\underline\pi}
\newcommand{\rog}{\bigstar}


\newcommand{\Nyg}{{\mathcal N}}



\newcommand{\mup}[1]{\ar@/_1em/[u]_-{#1}}
\newcommand{\mdown}[1]{\ar@/_1em/[d]_-{#1}}
\newcommand{\mcup}[2][]{\ar@[#1]@/_1em/[u]_-{\color{#1}#2}}
\newcommand{\mcdown}[2][]{\ar@[#1]@/_1em/[d]_-{\color{#1}#2}}

\newcommand{\Lewis}[4]{\xymatrix{
  #1 \mdown{#3}\\#2 \mup{#4}
}}

\newcommand{\sLewiss}[7]{\xymatrix@=1.8em{
  #1 \mdown{#4}\\#2 \mup{#5} \mdown{#6} \\#3 \mup{#7}
}}

\newdir{ >}{{}*!/-10pt/@{>}}
\newcommand{\pullback}{\ar@{}[dr]|<<{\mbox{\Huge$\lrcorner$}}}
\newcommand{\pushout}{\ar@<2pt>@{}[ul]|<{\mbox{\Huge$\ulcorner$}}}

\newcommand{\adjnctn}[4]{\xymatrix@1{
  #1 \ar@<1ex>[r]^-{#3} \ar@{}[r]|-{\bot} & #2 \ar@<1ex>[l]^-{#4}
}}
\newcommand{\dadjnctn}[6][2em]{
  \xymatrix@1@C=#1{
    #2 \ar@<2ex>[r]^-{#4} \ar@<-2ex>[r]_-{#6}
    \ar@{}@<1.2ex>[r]|-{\scriptscriptstyle\bot} \ar@{}@<-1.2ex>[r]|-{\scriptscriptstyle\bot}
    &
    #3 \ar[l]|-{#5}
  }
}

\newcommand{\dimorph}[4]{\xymatrix@1{
  #1 \ar@<1ex>[r]^-{#3} \ar@<-1ex>[r]_-{#4} & #2
}}
\newcommand{\twoCell}[5]{\xymatrix@1{
  #1 \ar@<1ex>[r]^-{#3} \ar@{}[r]|{\Downarrow #5} \ar@<-1ex>[r]_-{#4} & #2
}}

\newcommand{\ndo}[2]{\xymatrix@1{
  #1 \ar[r]^-{#2} & #1
}}


\newtheorem{theorem}{Theorem}[section]
\newtheorem*{theorem*}{Theorem}

\newtheorem{proposition}[theorem]{Proposition}

\theoremstyle{definition}

\newtheorem*{definition*}{Definition}

\newtheorem{remark}[theorem]{Remark}
\newtheorem*{remark*}{Remark}

\newtheorem{example}[theorem]{Example}
\newtheorem*{example*}{Example}

\newtheorem*{heuristic*}{Heuristic}


\newcommand{\MWitt}{
 \begin{tikzpicture}[x=1.2ex,y=1.2ex]
    \draw (0,0) rectangle +(1.,1);
 \end{tikzpicture}
}

\newcommand{\Mdual}{
 \protect{\begin{tikzpicture}[x=1.2ex,y=1.2ex]
    \draw (0,0) rectangle +(1.,1);
    \fill (0.5,0) -- (.5,1) -- (1,1) -- (1, 0) -- cycle;
 \end{tikzpicture}}
}

\newcommand{\Mtors}{\bullet}



\makeatletter
\providecommand\@dotsep{5}
\renewcommand{\listoftodos}[1][\@todonotes@todolistname]{%
  \@starttoc{tdo}{#1}}
\makeatother

\SelectTips{cm}{}
\UseAllTwocells
\NoCompileMatrices


\begin{document}

\title{\tops{$\RO(\T)$}{RO(T)}-graded $\TF$ of perfectoid rings}

\author[Y.~J.~F.~Sulyma]{Yuri~J.~F. Sulyma}
\address{Brown University \\ Providence, RI 02912}
\email{yuri\_sulyma@brown.edu}
\thanks{Sulyma was supported in part by NSF grants DMS-1564289 and DMS-1151577}

\begin{abstract}
For a perfectoid ring $R$, we compute the full $\RO(\T)$-graded ring $\mathrm{TF}_\bigstar(R;\Z_p)$. This extends and simplifies work of Gerhardt and Angeltveit-Gerhardt. In even degrees, we find an $\RU(\mathbb T)$-graded version of B\"okstedt periodicity, with some additional classes in the case of perfect $\F_p$-algebras. In odd degrees, we find extremely intricate and rather mysterious torsion. We also discuss the $\RO(\T)$-graded homotopy Tambara functors $\mpi_\rog\THH(R;\Z_p)$.
\end{abstract}

\maketitle
\tableofcontents


\section{Introduction}

Topological Hochschild homology and its variants carry significant arithmetic information in their homotopy groups. For instance, Hesselholt \cite{LarsPTyp} has shown that $\TR^n_*$ realizes the de Rham-Witt complex: for $S$ a smooth algebra over a perfect $\F_p$-algebra $k$,
\[ \TR^n_*(S) = \Tor^{W_n(k)}_0(W_n\Omega^*_{S/k}, W_n(k)[\sigma]), \qquad |\sigma|=2. \]
The framework of genuine equivariant homotopy theory allows one to consider homotopy groups graded not only by integers $*\in\Z$, but by arbitrary virtual representations $\rog\in\RO(\T)$ of the circle group. For virtual representations of the form $\rog=*-V$ with $*\in\Z$ and $V$ an \emph{actual} $\T$-representation, the groups $\TR^n_\rog(k)$ arise when computing $\K$-theory of monoid algebras \cite[\sec9]{HMFinite}, and have been applied to many such calculations \cite{HMCyclicPolytopes,SpeirsTrunc,HesselholtAxes,SpeirsAxes}.
More recently, the degrees $\rog=*+V$, for $V$ an \emph{irreducible actual} $\T$-representation, were the key calculational input in identifying the slice filtration on $\THH$ \cite{SulSliceTHH}.

These degrees are a tiny portion of the representation ring $\RO(\T)$, and the groups in this range are fairly easy to compute. Although no such applications are known for general $\rog$, these groups are quite interesting in their own right, and should capture deeper structure of the de Rham-Witt complex. They are also significantly more difficult to compute than the special ranges indicated above.

Gerhardt \cite{TeenaTR} and Angeltveit-Gerhardt \cite{ROS1TR} have given algorithms to compute the \emph{groups} $\TR^n_\rog(k)$ for arbitrary $\rog$. In this paper, we revisit their work in order to compute the $\RO(\T)$-graded \emph{ring} $\TF_\rog(R;\Z_p)$, for $R$ either a $p$-torsionfree perfectoid ring or a perfect $\F_p$-algebra.

Their method is a novel spectral sequence they call the ``homotopy orbits to TR spectral sequence'' (HOTRSS). This is a slick way of packaging the usual strategy of induction on the isotropy separation sequence. We will present several upgrades to the HOTRSS:

\begin{enumerate}
\item We name classes using the ``gold elements'' $a_{\lambda_i},\,u_{\lambda_j}$ of equivariant homotopy theory \cite[\sec3]{HHR_KR}. While this is in some sense just a change of notation, it makes the calculations considerably more transparent, and makes it easy to track the multiplicative structure and solve extension problems.

\item Using the ``$q$-gold relation'' \cite[Lemma 4.9]{SulSliceTHH}, we can extend the method from perfect $\F_p$-algebras to perfectoid rings.

\item We give a ``homotopy orbits to TF spectral sequence'' (HOTFSS) for calculating $\TF_\rog(R;\Z_p)$ directly, starting from $\TC^-_*(R;\Z_p)$ and $\TP_*(R;\Z_p)$. The HOTFSS has much less torsion than the HOTRSS, and we are able to obtain a closed form for $\TF_\rog(R;\Z_p)$, the even part of which is simple and conceptual. If desired, the finite-level $\TR^{n+1}_\rog$ groups can be deduced using the long exact sequence
\[ \TF_{\rog+\lambda_n} \xra{a_{\lambda_n}} \TF_\rog(R;\Z_p) \to \TR^{n+1}_\rog(R;\Z_p) \xra\partial \TF_{\rog+\lambda_n-1} \]

\item Alternatively, we show how to formulate the HOTRSS as a spectral sequence of \emph{Mackey functors}, which we call the HO\m{TR}SS. Combined with our naming scheme, one can in principle determine the full $\RO(\T)$-graded Tambara functor $\mpi_\rog\THH(R;\Z_p)$. (The action of the Norm on the gold elements is given in \cite[Lemma 3.13]{HHR}.) However, we have not attempted to give a closed form for this.
\end{enumerate}
These advances should make it easier to import the result into algebraic geometry.

Let us review the setup; a full treatment of the necessary background can be found in \cite[Part 1]{SulSliceTHH}. The complex representation ring of $\T$ is
\[ \RU(\T) = \Z[\lambda^{\pm1}], \]
where $\lambda^i$ is the irreducible $\T$-representation in which $z\in\T$ acts via multiplication by $z^i$. As real representations, $\lambda^i$ and $\lambda^{-i}$ are isomorphic via complex conjugation. In the $p$-local setting we have $S^{\lambda^i} \simeq S^{\lambda^j}$ iff $v_p(i)=v_p(j)$, so we set $\lambda_i=\lambda^{p^i}$ and $\lambda_\infty=\lambda^0$. The real representations of $\T$ are all of the form $\alpha$ or $\alpha-1$ for a complex representation $\alpha$, and we will always write them this way.

\begin{remark}
Since we are really grading over $\bigoplus_{i\in\N\cup\{\infty\}}\Z\<\lambda_i\>$, it would be more accurate to speak of $\JO(\T)$-grading, or $\Pic$-grading; in particular, the multiplicative structure of $\RO(\T)$ is irrelevant in this paper. On the other hand, one of the main observations of \cite{SulSliceTHH} is that the ring structure of $\RO(\T)$---or at least the ability to form $q$-analogues---gives a clean way to keep track of induced representations.
\end{remark}

Given $\alpha\in\RU(\T)$, we define
\begin{align*}
  \alpha^{(r)} &= \rho_{p^r}^*(\alpha^{C_{p^r}})\\
  d_r(\alpha) &= \dim_\C(\alpha^{(r)}),
\end{align*}
where $\rho_{p^r}\colon\T\iso\T/C_{p^r}$ is the root isomorphism. Explicitly, given a virtual $\T$-representation
\begin{equation}
  \label{eq:irred-decomp}
  \alpha = k_0 \lambda_0 + \dotsb + k_n \lambda_n + k_\infty \lambda_\infty,
\end{equation}
we have
\begin{align*}
  \alpha^{(r)} &= k_r \lambda_0 + \dotsb + k_n \lambda_{n-r} + k_\infty \lambda_\infty\\
  d_r(\alpha) &= k_\infty+\sum_{i=r}^{n} k_i.
\end{align*}

A virtual representation $\alpha\in\RU(\T)$ can be represented in two ways: via the irreducible decomposition as in \eqref{eq:irred-decomp}, or via the dimension-sequence $d_\bullet(\alpha)$. The former is more natural for computing with cell structures, as was done in \cite{SulSliceTHH}; the latter is more natural when using the isotropy separation sequence, which is what do here. Therefore, we will always write such a representation as $\alpha=(d_0,\dotsc,d_n;d_\infty)$. Note that this encoding distinguishes between $\alpha$ and $\alpha+0\lambda_{n+1}$; in fact these will give different spectral sequences, although the final answer is of course the same.

For $i\ge0$, set
\begin{align*}
  a_i &= a_{\lambda_{i-1}}^{-1} a_{\lambda_i}\\
  u_i &= u_{\lambda_{i-1}} u_{\lambda_i}^{-1}
\end{align*}
where $a_{\lambda_{-1}}\defeq1$ and $u_{\lambda_{-1}}\in\TF_2(R;\Z_p)$ is defined to be the B\"okstedt generator. Note that $a_i\in\pi^\T_\rog\S$ by \cite[Corollary 2.9]{HZhZ}, and $u_i\in\TF_\rog(R;\Z_p)$ by Tsalidis' theorem \cite[Theorem 5.1]{ROS1TR}. These classes are well-suited to the dimension-sequence encoding since they only change $d_i$ by $\pm1$. With this notation, the $q$-gold relation \cite[Lemma 4.9]{SulSliceTHH} is
\[ a_i u_i = \phi^i(\xi). \]

The key technique in this paper is keeping track of names of elements, and the elements $a_i,\,u_i$ make this easy. All the classes in this paper have the form
\[ g_0(\alpha) \dotsm g_n(\alpha) u_{\lambda_n}^{d_\infty} \]
where $g_i(\alpha)$ is either $u_i^{d_i(\alpha)}$ or $a_i^{-d_i(\alpha)}$; this is just a matter of convincing oneself the terms have the correct degree. In particular, we define for convenience
\[ \vartheta_r^\alpha \defeq a_0^{-d_0} \dotsm a_r^{-d_r} u_{r+1}^{d_{r+1}}\dotsm u_n^{d_n} u_{\lambda_n}^{d_\infty}. \]
Now set
\[
  \BB \defeq A[a_i, u_j, u_{\lambda_i} \mid i\ge 0, j\ge 1]
\]
where $A=\Ainf(R)=\TF_0(R;\Z_p)$. This is our ``basic block'' \cite[Remark 2.2(ii)]{Greenlees4Real} or ``ground ring''.

\begin{theorem}
\label{thm:rog-trans}
Let $(A,(\xi))$ be a transversal perfect prism, corresponding to a $p$-torsionfree perfectoid ring $R=A/\xi$. The $\RU(\T)$-graded $\TF$ of $R$ is
\begin{align*}
  \TF_\rog(R;\Z_p) &= \BB[u_0].
\end{align*}

The remaining $\RO(\T)$-graded groups are as follows: let $\alpha=(d_0,\dotsc,d_n;d_\infty)$. Then $\TF_{\alpha-1}(R;\Z_p)$ has one summand for each contiguous subsequence $(d_s,d_{s+1},\dotsc,d_r)$ of $\alpha$ such that
\begin{itemize}
  \item $d_i\ge0$ for all $s\le i\le r$;

  \item either $s=0$, or $d_{s-1}<0$;

  \item if $d_\infty<0$, then either $r=n$ or $d_{r+1}<0$;

  \item if $d_\infty\ge0$, then $r<n$ and $d_{r+1}<0$.
\end{itemize}
Given such a subsequence, let
\begin{align*}
  o^\alpha_{r,s} &= \prod_{i=s}^r \phi^i(\xi)^{d_i}\\
  f^\alpha_r &= \prod_{i>r,\,d_i<0} \phi^{i}(\xi)^{-d_i}
\end{align*}
The corresponding summand is
\[\left\{\begin{array}{rl}
  A/o^\alpha_{r,s}\<\Sigma^{-1} \vartheta_r^\alpha\> & d_\infty<0\\
  A/(o^\alpha_{r,s},f^\alpha_r)\<\Sigma^{-1} \vartheta_r^\alpha\> & d_\infty\ge0
\end{array}\right.\]
\end{theorem}

\begin{example}
  If
  \begin{align*}
    \alpha &= (1,1,-1,2,-1,1;0)\\
    \beta &= (2,1,-1,1,0,2;-1)
  \end{align*}
  then
  \begin{align*}
    \TF_\alpha(R;\Z_p) &= A\<u_0 u_1 a_2 u_3^2 a_4 u_5\>\\
    \TF_{\alpha-1}(R;\Z_p) &= \frac A{\xi\phi(\xi),\phi^2(\xi)\phi^4(\xi)}\<\Sigma^{-1} \vartheta^\alpha_1\> \oplus \frac A{\phi^3(\xi)^2,\phi^4(\xi)}\<\Sigma^{-1} \vartheta^\alpha_3\>\\
    \TF_{\beta-1}(R;\Z_p) &= \frac A{\xi^2\phi(\xi)}\<\Sigma^{-1}\vartheta^\beta_1\> \oplus \frac A{\phi^3(\xi)\phi^5(\xi)^2}\<\Sigma^{-1}\vartheta^\beta_5\>
  \end{align*}
\end{example}

\begin{theorem}
\label{thm:rog-crys}
Let $A$ be a crystalline perfect prism, corresponding to a perfect $\F_p$-algebra $k=A/p$. The $\RU(\T)$-graded $\TF$ of $k$ is
\begin{align*}
  \TF_\rog(k) &= \BB[u_0, a_{i-1}^{-1} a_i \mid i\ge 1].
\end{align*}
The remaining $\RO(\T)$-graded groups are as follows; let $\alpha=(d_0,\dotsc,d_n;d_\infty)$.

If $d_\infty<0$, then $\TF_{\alpha-1}(k)$ has one summand for each collection of indices $0\le i_1\le\dotsc\le i_\ell\le n$ such that
\begin{itemize}
  \item each $d_{i_j}>0$;

  \item $\sum_{i_j < i < i_{j+1}} d_i = 0$ for all $j=1,\dotsc,\ell-1$;

  \item there is no $i_j < \hat\imath < i_{j+1}$ such that $\sum_{i_j < i < \hat\imath} d_i = 0$ or $\sum_{\hat\imath < i < i_{j+1}} d_i=0$;

  \item the collection is as long as possible with these properties.
\end{itemize}
Given such a subsequence, let
\begin{align*}
   o &= \sum_{j=1}^\ell d_{i_j}
\end{align*}
Then the corresponding summand is
  \[ A/p^o\<\Sigma^{-1} \vartheta_{i_\ell}^\alpha\>. \]

If $d_\infty\ge0$, we recursively (starting from $r=n$) define
\[
  e_r = \min\{d_r, s_r\},
  \qquad
  s_r = \sum_{i>r} (-e_i).
\]
$\TF_{\alpha-1}(k)$ then has one summand for each collection of indices $0\le i_1\le\dotsc\le i_\ell\le n$ such that
\begin{itemize}
  \item each $e_{i_j}>0$;

  \item $\sum_{i_j < i \le i_{j+1}} d_i = e_{i_{j+1}}$ for all $j=1,\dotsc,\ell-1$;

  \item there is no $i_j < \hat\imath < i_{j+1}$ such that $\sum_{i_j < i \le \hat\imath} d_i = e_{\hat\imath}$ or $\sum_{\hat\imath < i \le i_{j+1}} d_i=e_{i_{j+1}}$;

  \item the collection is as long as possible with these properties.
\end{itemize}
Given such a subsequence, let
\begin{align*}
   o &= \sum_{j=1}^\ell e_{i_j}
\end{align*}
Then the corresponding summand is
  \[ A/p^o\<\Sigma^{-1} \vartheta_{i_\ell}^\alpha\>. \]
\end{theorem}

\begin{example}
  If
  \begin{align*}
    \alpha &= (1,1,-1,2,-1,1;0)\\
    \beta &= (2,-1,2,-1,1;-1)\\
    \gamma &= (2,1,-1,1;-1)\\
    \intertext{then}
    e_\bullet(\alpha) &= (0,1,-1,1,-1,0)\\
    \intertext{and}
    \TF_\alpha(k) &= A\<u_0 (a_1^{-1} a_2) u_3 (a_3^{-1} a_4) u_5\>\\
    \TF_{\alpha-1}(k) &= A/p^2\<\Sigma^{-1}\vartheta^\alpha_3\>\\
    \TF_{\beta-1}(k) &= A/p^3\<\Sigma^{-1} \vartheta^\beta_4\> \oplus A/p^2\<\Sigma^{-1} \vartheta^\beta_2\>\\
    \TF_{\gamma-1}(k) &= A/p\<\Sigma^{-1}\vartheta^\gamma_3\> \oplus A/p^3\<\Sigma^{-1}\vartheta^\gamma_1\>.
  \end{align*}
\end{example}

In even degrees, we get an $\RU(\T)$-graded form of B\"okstedt periodicity; interestingly, $u_0=u_{\lambda_{-1}} u_{\lambda_0}^{-1}$ appears as slightly more fundamental than the B\"okstedt generator $u_{\lambda_{-1}}$. The classes $a_{i-1}^{-1} a_i$ reflect the collision of all the $\phi^i(\xi)$'s in the torsion case; note that $a_{i-1}^{-1} a_i = u_{i-1} u_i^{-1}$. 

\begin{remark}
We do not know the significance of the extremely intricate structure in the odd degrees, but we will share two leads. First, it may be possible to explain the odd groups in terms of the even ones via equivariant Anderson duality, as is done in \cite[\sec4.5]{Zengv1} or \cite[\sec6.2]{Zeng}. We have not taken this up as the homological algebra of $\m W[u_{\lambda_{-1}}]$-modules is considerably more complicated than that of $\m\Z$-modules. Second, in the crystalline case, torsion $A$-modules are parametrized by an affine Grassmannian, and the formulas of \cite[\sec\sec7,8]{WittAffGr} look somewhat similar to ours. For example, the sequences considered in \cite[Definition 7.1]{WittAffGr} correspond to actual $\T$-representations $V$ with $V^\T=0$, and the element called $\epsilon_j$ there is the degree of our class $a_{j-1}^{-1}a_j$.
\end{remark}

\begin{remark}
Angeltveit-Gerhardt computed not just $\TR^n_\rog(\F_p)$, but also $\TR^n_\rog(\Z;\F_p)$ and $\TR^n_\rog(\ell;V(1))$. This was applied in \cite{AGHtrunc} to calculate $\K_*(\Z[x]/(x^m),(x))$ up to extension, and in \cite{AGaxes} to compute $\K_*(\Z[x,y]/(xy),(x))$ up to extension. One could adapt the other two calculations from \cite{ROS1TR} in the same way as we have done for $\F_p$. Here is a different approach: extend our result to Breuil-Kisin prisms to determine $\TF_\rog(\O_K/\S_{W(k)}\psr z)$, and then use a spectral sequence as in \cite{KrauseNikolausBP} to compute $\TF_\rog(\O_K;\Z_p)$. This implies the computation of $\TF_\rog(\Z)$ or of other number fields. This might allow one to compute the above $\K$-groups completely. For $\ell$, one should consider $\TF_\rog(\ell/\S_p\psr{v_0,v_1})$.
\end{remark}

\subsection{Caveat lector}
\label{sub:caveat}
Tracking names of classes is not sufficient to identify the full multiplicative structure, since there is the possibility of exotic multiplications, as in \cite[Proposition 6.9]{Zeng}. While trying to work these out, we encountered a contradiction that we have not been able to resolve (it is probably due to the author not understanding hidden extensions well enough). For perfect $\F_p$-algebras, there is a subtle phenomenon that only starts once one gets out to $C_{p^5}$-representations, although the problem is more fundamental than that; we explain the issue in \sec\ref{sec:fail}. We have decided to release the paper in its current form since:
\begin{itemize}
\item the results are definitely correct, and already very interesting, for even degrees;

\item the paper is nearly unreadable, so the sooner people can start digesting it the better---were it not for this issue, the paper would have been released a year ago;

\item hopefully a reader will see what we are doing wrong.
\end{itemize}

\subsection{Notation}
All equations are meant up to units. Throughout $A$ denotes a perfect prism with Frobenius $\phi$ and generator $\xi$, and we write $R=A/\xi$. We set
\[ [p^n]_A \defeq \xi\phi(\xi)\dotsm\phi^{n-1}(\xi). \]
When we write $\T$ we really mean $C_{p^\infty}$. By $\THH(R;\Z_p)$ we mean the (proper-)genuine $\T$-spectrum. Given a $\T$-Mackey functor $\m M$, we write $\tr_n\m M$ for the sub-Mackey functor generated under transfers by $\res^\T_{C_{p^n}}\m M$. We also set $\Phi^n\m M=\m M/\tr_n\m M$. We write $\m W$ for the Mackey functor with $\m W(\T/C_{p^n})=A/[p^{n+1}]_A$, so that $\mpi_0\THH(R;\Z_p)=\m W$.

\subsection{Overview}
We begin in \sec\ref{sec:warmup} by explaining the calculation in detail for $C_p$-representations. This is intended to ease the reader into $\RO(\T)$-graded calculations, but also contains all the computational input needed for the HOTFSS in general. We construct the HOTFSS and HO\m{TR}SS in \sec\ref{sec:hotfss}, and write down the $E^1$ page. We analyze the HOTFSS for transversal perfect prisms in \sec\ref{sec:trans}, and for crystalline perfect prisms in \sec\ref{sec:crys}. In \sec\ref{sec:fail}, we explain the bug mentioned above.

\subsection{Acknowledgements}
We thank Mike Hill for the suggestion to study the slice filtration on $\THH$, which is what motivated this work, and especially for sharing \cite[Observation 2.32]{SulSliceTHH}, which led to the discovery of the $q$-gold relation. Dylan Wilson provided the crucial suggestion to use the gold elements. We are further grateful to Andrew Blumberg for supervision and guidance; to Aaron Royer for patiently fielding many questions about equivariant stable homotopy theory; to Teena Gerhardt for clarifying a confusion about the Tate spectral sequence; to Ben Antieau and Thomas Nikolaus for help sorting out a confusion about the $A$-algebra structure on $\TR^n$; and to Hood Chatham and Mingcong Zeng for helpful conversations. Additionally, this paper owes a tremendous intellectual debt to \cite{ROS1TR} and \cite{Zeng}.

Preliminary versions of the results in this paper appeared as part of the author's PhD thesis at the University of Texas at Austin.


\newcommand{\B}{\rule{0em}{1.2em}}

\newcommand{\gu}{u_0^{d_0} u_{\lambda_0}^{d_\infty}}
\newcommand{\ga}{a_0^{-d_0} u_{\lambda_0}^{d_\infty}}

\section{Warmup: \tops{$C_p$}{C\_p} representations}
\label{sec:warmup}

In this section we explain in detail how to compute $\TF_\rog(R;\Z_p)$ and $\mpi_\rog\THH(R;\Z_p)$ for $\rog$ ``a $C_p$-representation''. By this we mean that $\rog\in\{\alpha,\alpha-1\}$ for $\alpha\in\RU(\T)$ of the form $\alpha=(d_0;d_\infty)$. Explicitly, the irreducible decomposition of $\alpha$ is
\[ \alpha = (d_0-d_\infty)\lambda_0 + d_\infty\lambda_\infty. \]

We calculate $\TF_\rog(R;\Z_p)$ in \sec\ref{sub:warmup-hotfss}; the answer is the same in the crystalline as in the transversal case (this is not true for larger representations). We calculate $\mpi_\rog\THH(R;\Z_p)$ for $p$-torsionfree $R$ in \sec\ref{sub:warmup-hotrss-trans}, and for $p$-torsion $R$ in \sec\ref{sub:warmup-hotrss-tors}. 

We will proceed via the isotropy separation square
\begin{equation}
\label{eq:isotropy}
\vcenter{\xymatrix{
  \Sigma T_h \ar@{=}[d] \ar[r] & T \ar[r]^-R \ar[d] & T^\Phi \ar[d]^-\varphi\\
  \Sigma T_h \ar[r] & T^h \ar[r] & T^t
}}
\end{equation}
where we write $T=\THH(R;\Z_p)$, $T_h=E\T_+\otimes T$, $T^\Phi=\widetilde{E\T}\tnsr T$, etc. In our notation, the $\Z$-graded homotopy of the right square is
\[\xymatrix{
  A[u_{\lambda_{-1}}] \ar[r] \ar[d] & A[t^{-1}] \ar[d]\\
  A[u_{\lambda_{-1}}, t] \ar[r] & A[t^{\pm}]
}\]
where $t=a_{\lambda_0}u_{\lambda_0}^{-1}$, subject to the relation $u_{\lambda_{-1}} t = \xi$.

Before doing any actual calculations, we explain the ``shape'' of what will happen. The bottom row of \eqref{eq:isotropy} is $u_{\lambda_0}$-periodic, meaning the groups here depend only on $d_0$. On the other hand, the right column is $a_{\lambda_0}$-periodic, meaning the groups here depend only on $d_\infty$. This is displayed in Figure~\ref{fig:shape-dim}: green, blue, and red denote ``orbit'', ``fixed-point'', and ``Tate'' information respectively. Under the motivic filtrations of \cite{BMS2}, these correspond to $\Nyg_{\le i}\prism$, $\Nyg^{\ge i}\prism$, and $\prism$. We also display this information using the usual grading convention in Figure~\ref{fig:shape-standard}; the double cone pattern seen here is familiar to equivariant homotopy theorists.

\begin{figure}[h]
  \begin{center}
    \begin{tikzpicture}[scale=1.5]
\draw[white] (-3, 0) -- (-3, 3) -- (8, 3) -- (8, 0) -- cycle;
\draw[thick] (-2, 1) -- (-2, 2) -- (-1, 2) -- (-1, 1) -- cycle;
\draw[anchor=north] (-1.5, 1) node {$d_\infty$};
\draw[anchor=east] (-2, 1.5) node {$d_0$};

\draw[anchor=north] (0.5, 0) node {$\Sigma T_h$};
\fill[ForestGreen!40] (0, 0.5) -- (0, 1) -- (1, 1) -- (1, 0.5) -- cycle;

\draw[anchor=north] (0.5, 2) node {$\Sigma T_h$};
\fill[ForestGreen!40] (0, 2.5) -- (0, 3) -- (1, 3) -- (1, 2.5) -- cycle;

\draw[anchor=north] (2.5, 0) node {$T^h$};
\fill[blue!40] (2, 0.5) -- (2, 1) -- (3, 1) -- (3, 0.5) -- cycle;
\fill[red!40] (2, 0) -- (2, 0.5) -- (3, 0.5) -- (3, 0) -- cycle;
\draw[anchor=north] (4.5, 0) node {$T^t$};
\fill[red!40] (4, 0) -- (4, 1) -- (5, 1) -- (5, 0) -- cycle;

\draw[anchor=north] (4.5, 2) node {$T^\Phi$};
\fill[red!40] (4.5, 2) -- (4.5, 3) -- (5, 3) -- (5, 2) -- cycle;

\draw[anchor=north] (2.5, 2) node {$T$};
\fill[ForestGreen!40] (2, 2.5) -- (2, 3) -- (2.5, 3) -- (2.5, 2.5) -- cycle;
\fill[red!40] (2.5, 2) -- (2.5, 2.5) -- (3, 2.5) -- (3, 2) -- cycle;
\fill[blue!40] (2.5, 2.5) -- (2.5, 3) -- (3, 3) -- (3, 2.5) -- cycle;

\foreach \i in {0,1} {
  \foreach \j in {0,1,2} {
    \draw[thick] (2*\j, 2*\i) -- (2*\j+1, 2*\i) -- (2*\j+1, 2*\i+1) -- (2*\j, 2*\i+1) -- cycle;
  }
}
\end{tikzpicture}
  \end{center}
  \caption{Regions of $\pi_\rog$ in dimensional grading}
  \label{fig:shape-dim}
\end{figure}
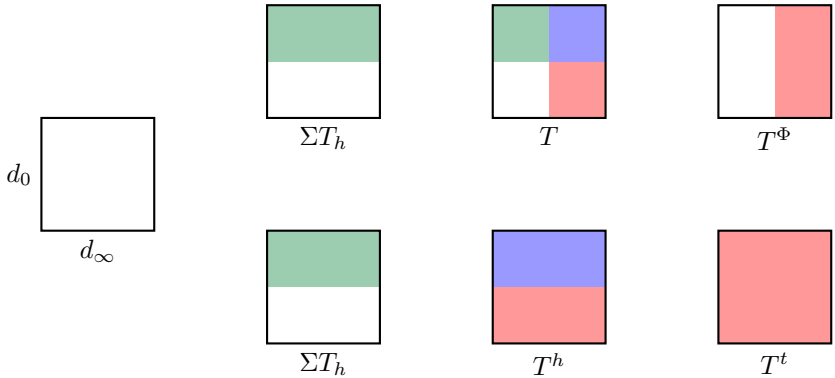

\begin{figure}[h]
  \begin{center}
    \begin{tikzpicture}[scale=1.5]
  \draw[white] (-3, 0) -- (-3, 3) -- (8, 3) -- (8, 0) -- cycle;
  \draw[thick] (-2, 1) -- (-2, 2) -- (-1, 2) -- (-1, 1) -- cycle;
  \draw[anchor=north] (-1.5, 1) node {$d_\infty$};
  \draw[anchor=east] (-2, 1.5) node {$d_0-d_\infty$};

  \draw[anchor=north] (0.5, 0) node {$\Sigma T_h$};
  \fill[ForestGreen!40] (0,1) -- (1, 1) -- (1, 0) -- cycle;

  \draw[anchor=north] (0.5, 2) node {$\Sigma T_h$};
  \fill[ForestGreen!40] (0,3) -- (1, 3) -- (1, 2) -- cycle;

  \draw[anchor=north] (2.5, 0) node {$T^h$};
  \fill[blue!40] (2, 1) -- (3, 1) -- (3, 0) -- cycle;
  \fill[red!40] (2,0) -- (2, 1) -- (3, 0) -- cycle;
  \draw[anchor=north] (4.5, 0) node {$T^t$};
  \fill[red!40] (4, 0) -- (4, 1) -- (5, 1) -- (5, 0) -- cycle;

  \draw[anchor=north] (4.5, 2) node {$T^\Phi$};
  \fill[red!40] (4.5, 2) -- (4.5, 3) -- (5, 3) -- (5, 2) -- cycle;

  \draw[anchor=north] (2.5, 2) node {$T$};
  \fill[ForestGreen!40] (2, 3) -- (2.5, 3) -- (2.5, 2.5);
  \fill[red!40] (2.5, 2.5) -- (2.5, 2) -- (3, 2) -- cycle;
  \fill[blue!40] (2.5, 3) -- (2.5, 2.5) -- (3, 2) -- (3, 3) -- cycle;

  \foreach \i in {0,1} {
    \foreach \j in {0,1,2} {
      \draw[thick] (2*\j, 2*\i) -- (2*\j+1, 2*\i) -- (2*\j+1, 2*\i+1) -- (2*\j, 2*\i+1) -- cycle;
    }
  }
  \end{tikzpicture}
  \end{center}
  \caption{Regions of $\pi_\rog$ in standard grading}
  \label{fig:shape-standard}
\end{figure}
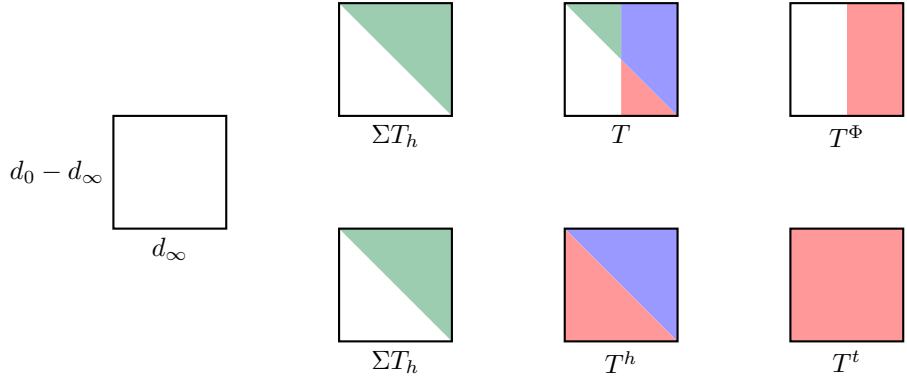

\subsection{Calculating \tops{$\TF_\rog$}{TF}}
\label{sub:warmup-hotfss}

By the above discussion, we have
\begin{align*}
  \TCmin_\alpha(R;\Z_p)
    &=
    \begin{cases}
      A\<u_{\lambda_{-1}}^{d_0} u_{\lambda_0}^{d_\infty-d_0}\> & d_0\ge0\\
      A\<t^{-d_0} u_{\lambda_0}^{d_\infty-d_0}\> & d_0<0
    \end{cases}
  & \TP_\alpha(R;\Z_p) &= A\<t^{-d_0} u_{\lambda_0}^{d_\infty-d_0}\>\\
    &=
    \begin{cases}
      A\<\gu\> & d_0\ge0\\
      A\<\ga\> & d_0<0
    \end{cases}
    &&= A\<\ga\>
\end{align*}
as well as $\TCmin_{\alpha-1}(R;\Z_p)=\TP_{\alpha-1}(R;\Z_p)=0$. It follows that
\[ \pi_\alpha^\T(\Sigma T_h) = 0,\quad \pi_{\alpha-1}^\T(\Sigma T_h) = A/\xi^{d_0}\<\Sigma^{-1}\ga\>. \]
We also have
\[
  \pi_\alpha^\T(T^\Phi) =
  \begin{cases}
    A\<\ga\> & d_\infty \ge 0\\
    0 & d_\infty < 0.
  \end{cases}
\]

If $d_0\ge0$ and $d_\infty<0$ (green region), we get
\begin{alignat*}{2}
  \TF_\alpha(R;\Z_p) &= 0, & \TF_{\alpha-1}(R;\Z_p) &= A/\xi^{d_0}\<\Sigma^{-1} \ga\>.\\
  \intertext{If $d_0<0$ and $d_\infty\ge0$ (red region), we get}
  \TF_\alpha(R;\Z_p) &= A\<\ga\>, & \qquad\TF_{\alpha-1}(R;\Z_p) &= 0.
  \intertext{If $d_0>0$ and $d_\infty\ge0$ (blue region), then the boundary map is
  \[\xymatrix{
    \pi^\T_\alpha(T^\Phi) \ar[r]^-\bdry \ar@{=}[d] & \pi^\T_{\alpha-1}(\Sigma T_h) \ar@{=}[d]\\
    A\<\ga\> \ar[r] & A/\xi^{d_0}\<\Sigma^{-1}\ga\>.
  }\]
Here there is a slight subtlety: the boundary map $\bdry$ is $\phi$-linear, so the kernel is $\phi^{-1}(\xi)^{d_0} \ga$. However, the map $R\colon\TF_\alpha\to\pi^\T_\alpha(T^\Phi)$ is $\phi^{-1}$-linear, so we end up with $\xi^{d_0}\ga=\gu$ as the generator. Because of this, we will pretend from now on that $R$ and $\bdry$ are both $A$-linear. In any event, for this case we have obtained}
\TF_\alpha(R;\Z_p) &= A\<\gu\>, &\TF_{\alpha-1}(R;\Z_p) &=0.
\end{alignat*}

\subsection{Calculating \tops{$\mpi_\rog$}{Mackey functors}: Transversal case}
\label{sub:warmup-hotrss-trans}

Now we do the Mackey functor version of this calculation when $A$ is transversal. On $\T/C_{p^n}$, we have
\begin{align*}
  \pi^{C_{p^n}}_\alpha(T^h)
    &=
    \begin{cases}
      A/[p^{n+1}]_A\<\gu\> & d_0\ge0\\
      A/\phi([p^n]_A)\<\ga\> & d_0<0
    \end{cases} &
  \pi^{C_{p^n}}_\alpha(T^t)
    &= A/\phi([p^n]_A)\<\ga\>.
\end{align*}
By transversality, the kernel of $A/[p^{n+1}]_A \xra{\xi^{d_0}} A/\phi([p^n]_A)$ is $\phi([p^n]_A)A/[p^{n+1}]_A\cong A/\xi$. This gives
\[ \pi^{C_{p^n}}_\alpha(\Sigma T_h) = A/\xi\<\phi([p^n]_A)\gu\>,\qquad \pi^{C_{p^n}}_{\alpha-1}(\Sigma T_h) = \frac A{\phi([p^n]_A),\xi^{d_0}}\<\Sigma^{-1} \ga \>. \]

Note that $\phi([p^n]_A)=\tr^{C_{p^n}}_e(1)$ is exactly the transfer from the trivial subgroup. Thus, the Mackey functor expression of the above calculation is
\[ \mpi_\alpha(\Sigma T_h) = \tr_0\m W\<\gu\>,\qquad \mpi_{\alpha-1}(\Sigma T_h) = \Phi^0\m W/\xi^{d_0}\<\Sigma^{-1}\ga\> \]
for $d_0\ge 0$, and $0$ when $d_0<0$. We also have
\[ \mpi_\alpha(T^\Phi) = \begin{cases}\Phi^0\m W\<\ga\> & d_\infty\ge0\\0 & d_\infty<0\end{cases} \]

As before, this determines the answer when $d_0\le0$ or $d_\infty<0$. To explain the case $d_0>0,\,d_\infty\ge0$, we will start using spectral sequence notation. Our notations for Mackey functors are listed in Figure~\ref{fig:mackey}; for reasons of space we have only displayed the Lewis diagram for $C_p$, but these really are $\T$-Mackey functors. With these notations, $\mpi_{\alpha(-1)}\THH(R;\Z_p)$ is calculated by the spectral sequence in Figure~\ref{fig:hotrss-warmup-trans}. The kernel of the differential is $\xi^{d_0}\ga=\gu$ by transversality.

\begin{figure}[h]
  \begin{center}
    \begin{tabular}{|r|c|c|c|c|}
    \firsthline
    \bfseries Symbol & $\MWitt$ & $\Mdual$ & $\Mtors$ & $\Mtors_i$\\\hline
    \bfseries Name & $\m W$ & $\tr_0 \m W$ & $\Phi^0\m W$ & $\Phi^0\m W/\xi^i$\\\hline
    \bfseries Lewis diagram & $\Lewis{A/\xi\phi(\xi)}{A/\xi}1{\phi(\xi)}$ & $\Lewis{A/\xi}{A/\xi}p1$ & $\Lewis{A/\phi(\xi)}0{}{}$ & $\Lewis{A/(\phi(\xi),\xi^{d_0})}0{}{}$\\\lasthline
    \end{tabular}
  \end{center}
  \caption{Hieroglyphics for Mackey functors}
  \label{fig:mackey}
\end{figure}

\begin{figure}[h]
  \begin{center}
    \begin{tabular}{c|cc||c|cc}
    1 & $\Mtors_{d_0}\<\Sigma^{-1}\ga\>$ & $\Mdual\<\gu\>$ & 1 && $\Mdual\<\gu\>$ \rdelim\}{2}{0em}[$\MWitt\<\gu\>$]\\
    0 & & $\Mtors\<\ga\>$ & 0 & & $\Mtors\<\gu\>$ \B\\\hline
    $E^1$ & $-1$ & $0$ & $E^\infty$ & $-1$ & $0$
    \end{tabular}
  \end{center}
  \caption{HO\m{TR}SS for transversal $A$ with $d_0>0,\,d_\infty\ge0$}
  \label{fig:hotrss-warmup-trans}
\end{figure}

We summarize the complete computation in Figure~\ref{fig:chart-trans}.

\begin{figure}[h]
  \begin{center}\includegraphics{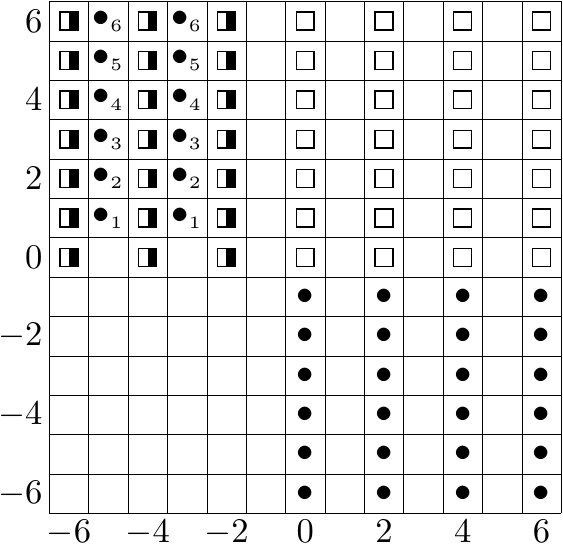}\end{center}
  \caption{$\mpi_{x+(y-\lf x/2\rf)\lambda}\THH(A/\xi;\Z_p)$ for transversal $A$}
  \label{fig:chart-trans}
\end{figure}

\subsection{Calculating \tops{$\mpi_\rog$}{Mackey functors}: Crystalline case}
\label{sub:warmup-hotrss-tors}

Now we do the Mackey functor calculation when $A$ is crystalline. On $\T/C_{p^n}$, we still have
\begin{align*}
  \pi^{C_{p^n}}_\alpha(T^h)
    &=
    \begin{cases}
      A/p^{n+1}\<\gu\> & d_0\ge0\\
      A/p^n\<\ga\> & d_0<0
    \end{cases} &
  \pi^{C_{p^n}}_\alpha(T^t)
    &= A/p^n\<\ga\>.
\end{align*}
However, now the kernel of $A/p^{n+1} \xra{p^{d_0}} A/p^n$ is $p^{n-\min(n,d_0)}A/p^{n+1}A$, while the cokernel is $A/p^{\min(n,d_0)}A$. We can express this in Mackey terms as
\[ \mpi_\alpha(\Sigma T_h) = \tr_{d_0}\m W\<\gu\>,\qquad \mpi_{\alpha-1}(\Sigma T_h) = \Phi^0\m W/p^{d_0}\<\Sigma^{-1}\ga\> \]
for $d_0\ge0$, and $0$ otherwise.

To explain the differential, we will again use spectral sequence notation. We need some more hieroglyphics for Mackey functors; let $\Mtors^i = \Phi^i\m W$ and $\Mdual_i=\tr_i\m W$ (beware that $\Mtors_i$ still denotes $\Phi^0\m W/p^i$). The spectral sequence is displayed in Figure~\ref{fig:hotrss-warmup-crys}.

\begin{figure}[h]
  \begin{center}
    \begin{tabular}{c|cc||c|cc}
    1 & $\Mtors_{d_0}\<\Sigma^{-1}\ga\>$ & $\Mdual_{d_0}\<\gu\>$ & 1 && $\Mdual_{d_0}\<\gu\>$ \rdelim\}{2}{0em}[$\MWitt\<\gu\>$]\\
    0 & & $\Mtors\<\ga\>$ & 0 & & $\Mtors^{d_0}\<\gu\>$ \B\\\hline
    $E^1$ & $-1$ & $0$ & $E^\infty$ & $-1$ & $0$
    \end{tabular}
  \end{center}
  \caption{HO\m{TR}SS for crystalline $A$ with $d_0>0,\,d_\infty\ge0$}
  \label{fig:hotrss-warmup-crys}
\end{figure}

We summarize the complete computation in Figure~\ref{fig:chart-crys}.


\begin{figure}[h]
  \begin{center}\includegraphics{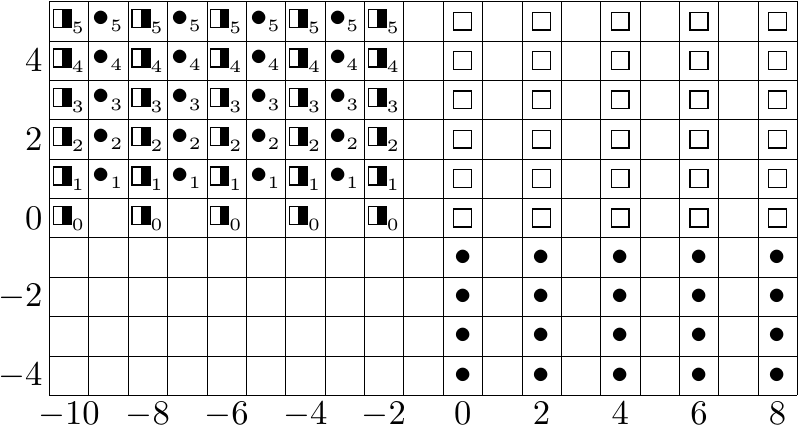}\end{center}
  \caption{$\mpi_{x+(y-\lf x/2\rf)\lambda}\THH(k)$}
  \label{fig:chart-crys}
\end{figure}


\section{The general strategy}
\label{sec:hotfss}

Equivariantly, the fact that $\THH$ is cyclotomic is expressed by $\pi^\T_\rog(T^\Phi) = a_{\lambda_0}^\pm\TF_{\rog'}$. More precisely, classes in $\TF_{\rog'}$ correspond to classes in $\TPhi_\rog$ via $a_i\mapsto a_{i+1}$, $u_i\mapsto u_{i+1}$. Thus, in principle we can compute the result for $\alpha=(d_0,\dotsc,d_{n-1};d_\infty)$ by induction on $n$.

The difficulty with inductive arguments is that one needs to know the result in advance. In this section, we explain how to unroll this induction into a spectral sequence, adapted from the HOTRSS of \cite{ROS1TR}. In \sec\ref{sub:hotfss}, we introduce the homotopy orbits to $\TF$ spectral sequence (HOTFSS) calculating $\TF_\rog(R;\Z_p)$, and write down its $E^1$ page. We record the Mackey functor version of this spectral sequence, which we call the HO\m{TR}SS, in \sec\ref{sub:hotrss}.

\subsection{Homotopy orbits to \tops{$\TF$}{TF} spectral sequence}
\label{sub:hotfss}

Let $\cF_r$ denote the family of subgroups of $\T$ contained in $C_{p^r}$. There is a filtration
\[ E\cF_{0+} \to E\cF_{1+} \to \dotsb \to E\cF_{r+} \to S^0 \]
with associated graded
\[ E\cF_{0+},\, E\cF_{1+}\tnsr\widetilde{E\cF_0},\,\dotsc,\,E\cF_{(i+1)+}\tnsr\widetilde{E\cF_i},\,\dotsc,\,\widetilde{E\cF_r}. \]
Smashing with a $\T$-spectrum $X$, we get a spectral sequence calculating $X^\T$ in terms of $(X^{\Phi H})_{h\T/H}$. When $X=\THH$, we call this the homotopy orbits to TF spectral sequence (HOTFSS).

Let $\alpha=(d_0,\dotsc,d_{n-1};d_\infty)$. In the above construction, we will take $r=n-1$ (any $r\ge n-1$ would do). Since the homotopy orbits are even, it suffices to fix an $\alpha$ and consider only $\pi_\alpha$ and $\pi_{\alpha-1}$. Recall our notation
\[ \vartheta^\alpha_r = a_0^{-d_0}\dotsm a_r^{-d_r} u_{r+1}^{d_{r+1}}\dotsm u_{n-1}^{d_{n-1}} u_{\lambda_{n-1}}^{d_\infty}. \]

\begin{proposition}
The $E^1$ page of the homotopy orbits to $\TF$ spectral sequence is given by
\begin{align*}
  E^1_{n-r}
  &=
  \begin{cases}
    A/\phi^r(\xi)^{d_r}\<\Sigma^{-1} \vartheta^\alpha_r\> & d_r>0\\
    0 & d_r \le 0
  \end{cases}\\
  \intertext{for $0\le r<n$, and}
  E^1_0 &=
  \begin{cases}
    A\<\vartheta^\alpha_{n-1}\> & d_\infty\ge0\\
    0 & d_\infty < 0.
  \end{cases}
\end{align*}
\end{proposition}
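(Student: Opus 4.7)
The plan is to work through the spectral sequence piece-by-piece, reducing each entry of the $E^1$ page to a computation already performed in Section~\ref{sec:warmup} via the cyclotomic structure on $T = \THH(R;\Z_p)$.

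First I would identify the associated graded pieces geometrically. The bottom piece $E\cF_{0+}$, smashed with $T$, is the homotopy orbits spectrum $T_h$. For $1 \le r \le n-1$, the middle piece $E\cF_{r+} \tnsr \widetilde{E\cF_{r-1}}$ is $\T$-equivariantly concentrated at isotropy $C_{p^r}$; its smash with $T$ satisfies $(E\cF_{r+} \tnsr \widetilde{E\cF_{r-1}} \tnsr T)^\T \simeq (T^{\Phi C_{p^r}})_{h\T/C_{p^r}}$, identified via the standard "concentrated isotropy" splitting and the fact that $(\widetilde{E\cF_{r-1}} \tnsr T)^{C_{p^r}} = T^{\Phi C_{p^r}}$ while $(E\cF_{r+})^{C_{p^r}} \simeq E(\T/C_{p^r})_+$. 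The top piece $\widetilde{E\cF_{n-1}} \tnsr T$ directly generalizes the spectrum $T^\Phi$ from the $n = 1$ warmup.

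Next I would apply the cyclotomic isomorphism $T^{\Phi C_{p^r}} \simeq T$ of $\T$-spectra (via $\T/C_{p^r} \cong \T$) to reduce each middle piece to the orbits case with a grading shift. Representation-theoretically, this shift sends $\lambda_i \leftrightarrow \lambda_{i+r}$, so $a_i \leftrightarrow a_{i+r}$ and $u_i \leftrightarrow u_{i+r}$; on the coefficient side, the prismatic element $\xi \in A$ is identified with $\phi^r(\xi)$, reflecting the Frobenius twist built into the cyclotomic structure. Under this identification, the $r$th middle piece becomes the warmup orbits piece with $d_0$ replaced by $d_r$ and $\xi$ replaced by $\phi^r(\xi)$. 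Plugging into the warmup computation $\pi^\T_{\alpha-1}(\Sigma T_h) = A/\xi^{d_0}\<\Sigma^{-1} \ga\>$ then yields the claimed $E^1_{n-r} = A/\phi^r(\xi)^{d_r}\<\Sigma^{-1}\vartheta^\alpha_r\>$ for $d_r > 0$, with the shifted generator matching the stated $\vartheta^\alpha_r = a_0^{-d_0}\dotsm a_r^{-d_r} u_{r+1}^{d_{r+1}} \dotsm u_{n-1}^{d_{n-1}} u_{\lambda_{n-1}}^{d_\infty}$. The top piece is handled by the maximal iteration of cyclotomic shift, producing $E^1_0 = A\<\vartheta^\alpha_{n-1}\>$ for $d_\infty \ge 0$ from the warmup's $\pi^\T_\alpha(T^\Phi) = A\<\ga\>$.

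The main obstacle will be carefully tracking these grading shifts through the iterated cyclotomic identifications --- verifying that $r$ applications of the cyclotomic Frobenius produce precisely $\phi^r(\xi)$ on the coefficient side, and that the representation-theoretic bookkeeping lands on exactly the generator $\vartheta^\alpha_r$ as written (rather than a variant differing by invertible Euler classes or orientation shifts). One also needs to confirm that the vanishing statements ($E^1_{n-r} = 0$ when $d_r \le 0$, and $E^1_0 = 0$ when $d_\infty < 0$) correspond correctly to when $\vartheta^\alpha_r$ or $\vartheta^\alpha_{n-1}$ involves non-invertible classes in the respective localized piece. Once this bookkeeping is pinned down, each entry of the $E^1$ page is a direct application of the warmup.
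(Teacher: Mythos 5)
Your proposal is correct and matches the paper's approach: the paper's proof is simply the terse statement that the $n=1$ case was established in the warmup, that cyclotomicity gives the general case up to a Frobenius twist, and that smashing with $\widetilde{E\cF_r}$ inverts the relevant Euler classes to account for the names. You have essentially unpacked that same argument — identifying the associated graded pieces via the isotropy decomposition, applying the cyclotomic equivalence $T^{\Phi C_{p^r}}\simeq T$ with its attendant reindexing $a_i\mapsto a_{i+r}$, $u_i\mapsto u_{i+r}$, $\xi\mapsto\phi^r(\xi)$, and reducing to the warmup computation.
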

\begin{proof}
The hard work was done in \sec\ref{sub:warmup-hotfss}, where we showed this is true for $n=1$. By cyclotomicity, the answer in the general case is isomorphic to this one up to a Frobenius twist. For the names, note that smashing with $\widetilde{E\mathcal F_r}$ inverts $a_0,\dotsc,a_{r-1}$.
\end{proof}

We see that there are three distinct regions of the HOTFSS we must understand:
\begin{enumerate}
\item $\TF_\alpha$, $d_\infty(\alpha)\ge0$;

\item $\TF_{\alpha-1}$, $d_\infty(\alpha)<0$;

\item $\TF_{\alpha-1}$, $d_\infty(\alpha)\ge0$.
\end{enumerate}

We list examples of the $E^1$ page below.

\begin{figure}[h]
  \begin{center}
    \begin{tabular}{c|cc}
      4 & $A/\xi^{2}\<a_{0}^{-2}u_{1}^{}u_{2}^{-2}u_{3}^{3}u_{\lambda_{3}}\>$\\
      3 & $A/\phi^{}(\xi)^{}\<a_{0}^{-2}a_{1}^{-1}u_{2}^{-2}u_{3}^{3}u_{\lambda_{3}}\>$\\
      2 & \\
      1 & $A/\phi^{3}(\xi)^{3}\<a_{0}^{-2}a_{1}^{-1}a_{2}^{2}a_{3}^{-3}u_{\lambda_{3}}\>$\\
      0 && $A\<a_{0}^{-2}a_{1}^{-1}a_{2}^{2}a_{3}^{-3}u_{\lambda_{3}}\>$\\\hline
      $E^1$ & $-1$ & 0
    \end{tabular}
  \end{center}
  \caption{HOTFSS for $\alpha=(2,1,-2,3;1)$}
\end{figure}

\begin{figure}[h]
  \begin{center}
    \begin{tabular}{c|cc}
      5 & $A/\xi^{3}\<a_{0}^{-3}u_{2}^{2}u_{3}^{}u_{4}^{-1}u_{\lambda_{4}}^{-1}\>$\\
      4 & \\
      3 & $A/\phi^{2}(\xi)^{2}\<a_{0}^{-3}a_{2}^{-2}u_{3}^{}u_{4}^{-1}u_{\lambda_{4}}^{-1}\>$\\
      2 & $A/\phi^{3}(\xi)^{}\<a_{0}^{-3}a_{2}^{-2}a_{3}^{-1}u_{4}^{-1}u_{\lambda_{4}}^{-1}\>$\\
      1 & \\
      0 & \\\hline
      $E^1$ & $-1$ & 0
    \end{tabular}
  \end{center}
  \caption{HOTFSS for $\alpha=(3,0,2,1,-1;-1)$}
\end{figure}


\subsection{Homotopy orbits to \tops{$\m{\TR}$}{TR} spectral sequence}
\label{sub:hotrss}

The filtration introduced above also gives rise to a spectral sequence of Mackey functors calculating $\mpi_\rog\THH(R;\Z_p)$. We call this the homotopy orbits to $\m\TR$ spectral sequence (HO\m{TR}SS). Once again, the hard work was done in \sec\ref{sub:warmup-hotrss-trans} and \sec\ref{sub:warmup-hotrss-tors}, so we can simply write down the $E^1$ page.

\begin{proposition}
  Let $A$ be a transversal perfect prism, and let $\alpha=(d_0,\dotsc,d_{n-1};d_\infty)$. The $E^1$ page of the homotopy orbits to $\m\TR$ spectral sequence is given by
  \begin{align*}
    E^1_{n-r}
    &=
    \begin{cases}
      \tr_r\Phi^{r-1}\m W\<\vartheta^\alpha_{r-1}\> \oplus \Phi^r\m W/\phi^r(\xi)^{d_r}\<\Sigma^{-1}\vartheta_r\> & d_r>0\\
      0 & d_r \le 0
    \end{cases}\\
    \intertext{for $0\le r<n$, and}
    E^1_0 &=
    \begin{cases}
      \Phi^{n-1}\m W\<\vartheta^\alpha_{n-1}\> & d_\infty\ge0\\
      0 & d_\infty < 0.
    \end{cases}
  \end{align*}
\end{proposition}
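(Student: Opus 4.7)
The plan is to mirror the proof of the $\Z$-graded HOTFSS proposition above, reducing each slice of the spectral sequence to the $n=1$ Mackey functor computation of \sec\ref{sub:warmup-hotrss-trans} via cyclotomicity. Smashing the filtration $E\cF_{0+} \to \dotsb \to E\cF_{(n-1)+} \to S^0$ with $T = \THH(R;\Z_p)$ and taking $\mpi_\rog$ produces a spectral sequence of Mackey functors whose $E^1$ column at slice $r$ is the $\mpi_\rog$ of the associated graded piece at that slice.

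For $E^1_0$, the relevant piece is $\widetilde{E\cF_{n-1}} \otimes T$, whose $\T$-equivariant homotopy Mackey functor is identified via cyclotomicity with a Frobenius twist of the ``red region'' answer from the warmup. This directly yields $\Phi^{n-1}\m W\<\vartheta^\alpha_{n-1}\>$ in the $d_\infty \ge 0$ case and $0$ otherwise. For each intermediate slice $E^1_{n-r}$ with $0 \le r < n$, the relevant piece is $E\cF_{(r+1)+} \otimes \widetilde{E\cF_r} \otimes T$: smashing with $\widetilde{E\cF_r}$ inverts $a_0,\dotsc,a_{r-1}$ and concentrates the spectrum at geometric level $r$, and cyclotomicity then identifies it with a Frobenius twist by $\phi^r$ of the $\Sigma T_h$ appearing in the warmup isotropy separation.

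Applying the warmup calculations of $\mpi_\alpha(\Sigma T_h)$ and $\mpi_{\alpha-1}(\Sigma T_h)$, Frobenius-twisted by $\phi^r$, gives the direct sum $\tr_r\Phi^{r-1}\m W\<\vartheta^\alpha_{r-1}\> \oplus \Phi^r\m W/\phi^r(\xi)^{d_r}\<\Sigma^{-1}\vartheta^\alpha_r\>$ when $d_r > 0$ and $0$ when $d_r \le 0$, with the convention $\Phi^{-1}\m W = \m W$ making the $r = 0$ case match the warmup verbatim. Transversality of $A$ enters exactly as in the warmup, via the explicit $\ker$ and $\coker$ of the multiplication map $A/[p^{m+1}]_A \xra{\xi^{d_0}} A/\phi([p^m]_A)$ at each level $C_{p^m}$, with $\xi$ replaced by its Frobenius twist $\phi^r(\xi)$ after the identification. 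The names $\vartheta^\alpha_{r-1}$ and $\vartheta^\alpha_r$ follow from unwinding $\vartheta^\alpha_r = a_0^{-d_0}\cdots a_r^{-d_r} u_{r+1}^{d_{r+1}} \cdots u_{n-1}^{d_{n-1}} u_{\lambda_{n-1}}^{d_\infty}$ against the fact that $\widetilde{E\cF_r}$ inverts $a_0,\dotsc,a_{r-1}$.

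The main obstacle will be the bookkeeping needed to confirm that the Mackey functors at all levels $\T/C_{p^m}$ (for $m \ge r$) genuinely assemble into $\tr_r\Phi^{r-1}\m W$ and $\Phi^r\m W/\phi^r(\xi)^{d_r}$ rather than into some nontrivial extension; this requires checking that the restriction and transfer maps behave as expected after the Frobenius-twist identification. This is a careful but routine extension of the $\Z$-graded argument already used for the HOTFSS, with stability of the answer for $m \ge n$ giving the $\T$-Mackey structure claimed.
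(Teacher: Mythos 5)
Your proposal is correct and takes essentially the same route as the paper, which omits an explicit proof of this proposition and instead remarks that the hard work was done in the transversal warmup of \sec\ref{sub:warmup-hotrss-trans}, with the general case following by cyclotomicity, Frobenius twisting, and the observation that $\widetilde{E\cF_r}$ inverts $a_0,\dotsc,a_{r-1}$---exactly mirroring the argument the paper does give for the $\Z$-graded HOTFSS proposition. Your explicit note that the $r=0$ case requires reading $\Phi^{-1}\m W$ as $\m W$ is a helpful clarification of the paper's indexing.
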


\begin{proposition}
  Let $A$ be a crystalline perfect prism, and let $\alpha=(d_0,\dotsc,d_{n-1};d_\infty)$. The $E^1$ page of the homotopy orbits to $\m\TR$ spectral sequence is given by
  \begin{align*}
    E^1_{n-r}
    &=
    \begin{cases}
      \tr_{r+d_r}\Phi^{r-1}\m W\<\vartheta^\alpha_{r-1}\> \oplus \Phi^r/p^{d_r}\m W\<\Sigma^{-1} \vartheta^\alpha_r\> & d_r>0\\
      0 & d_r \le 0
    \end{cases}\\
    \intertext{for $0\le r<n$, and}
    E^1_0 &=
    \begin{cases}
      \Phi^{n-1}\m W\<\vartheta^\alpha_{n-1}\> & d_\infty\ge0\\
      0 & d_\infty < 0.
    \end{cases}
  \end{align*}
\end{proposition}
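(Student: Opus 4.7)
The proof follows the same strategy as for the transversal case: the hard work is the $C_p$-calculation of \sec\ref{sub:warmup-hotrss-tors}, which handles the $n=1$ case, and the general statement follows by cyclotomicity via a Frobenius twist at each filtration level.

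More explicitly, I would begin by identifying the $r$-th associated graded piece $E\cF_{(r+1)+} \tnsr \widetilde{E\cF_r} \tnsr \THH$. Smashing with $\widetilde{E\cF_r}$ inverts $a_0,\dotsc,a_{r-1}$, and by cyclotomicity $\THH^{\Phi C_{p^r}} \simeq \THH$ the resulting object carries the same Mackey-functor homotopy as the $C_p$-isotropy-separation data for an $r$-fold Frobenius twist of $\THH$. Applying the $n=1$ crystalline warmup then yields, for each $r$ with $d_r > 0$, the two summands: the ``cokernel'' piece $\mpi_{\alpha-1}(\Sigma T_h) = \Phi^0\m W/p^{d_0}$ promotes to $\Phi^r\m W/p^{d_r}\<\Sigma^{-1}\vartheta^\alpha_r\>$, and the ``kernel'' piece $\mpi_\alpha(\Sigma T_h) = \tr_{d_0}\m W$ promotes to $\tr_{r+d_r}\Phi^{r-1}\m W\<\vartheta^\alpha_{r-1}\>$. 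Both pieces vanish when $d_r \le 0$, exactly as in the warmup. At the top of the filtration, the geometric-fixed-point contribution $\pi^\T_\alpha(T^\Phi) = \Phi^{n-1}\m W\<\vartheta^\alpha_{n-1}\>$ appears precisely when $d_\infty \ge 0$.

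The main obstacle is justifying the shift $r \mapsto r+d_r$ on the subscript of $\tr$ in the crystalline case, which differs from the constant subscript $r$ in the transversal case. This reflects the fact that in the crystalline setting the kernel of $A/p^{n+1} \xra{p^{d_r}} A/p^n$ has length depending on both $n$ and $d_r$, whereas the transversal kernel is always a copy of $A/\xi$, independent of $d_r$. I would resolve this by tracking the kernel/cokernel computation at each $\T/C_{p^m}$ level and comparing against the Lewis diagrams in Figure~\ref{fig:mackey}; the subscript is already $d_0$ in the $n=1$ base case of \sec\ref{sub:warmup-hotrss-tors}, and the $r$-fold Frobenius twist adds $r$ to it to yield the claimed $r+d_r$.
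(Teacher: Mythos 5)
Your proposal is correct and takes essentially the same route as the paper. The paper gives no explicit proof for this proposition, simply remarking that ``the hard work was done'' in the crystalline warmup (\sec\ref{sub:warmup-hotrss-tors}) and that one can ``simply write down the $E^1$ page''; your sketch fills in exactly the intended reasoning (base case from the $C_p$ warmup, cyclotomicity supplying the $r$-fold Frobenius twist, $\widetilde{E\cF_r}$ inverting $a_0,\dotsc,a_{r-1}$ to produce the correct generator names). Your identification of the $\tr$-subscript shift $d_0\mapsto r+d_r$ as the one genuinely crystalline-specific bookkeeping point --- traceable to the fact that the kernel of $p^{d_r}\colon A/p^{n+1}\to A/p^n$ depends on $d_r$, unlike the transversal kernel which is always $A/\xi$ --- is also consistent with the warmup computation and with comparison to the transversal proposition.
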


We will not attempt to evaluate the HO\m{TR}SS in this paper, in order to find a closed form for $\mpi_\rog\THH(R;\Z_p)$. In the crystalline case this could certainly be done, but would be painful and not necessarily enlightening. In the transversal case, finding the kernels of the differentials is a non-trivial commutative algebra problem due to the gold relation.


\section{\tops{$\TF_\rog$}{TF} for transversal perfect prisms}
\label{sec:trans}

In this section we analyze the HOTFSS for transversal perfect prisms. Let $\alpha=(d_0,\dotsc,d_{n-1};d_\infty)$. Note that for $s<r$, we have
\[ \vartheta_r^\alpha = \vartheta_s^\alpha \prod_{s<i\le r} \phi^i(\xi)^{-d_i}. \]

\begin{proof}[Proof of Theorem \ref{thm:rog-trans}]
\hfill
\begin{enumerate}
\item Region 1: $\TF_\alpha(R;\Z_p)$, $d_\infty(\alpha)\ge0$.

On the $E^1$ page, we start out with $A\<\vartheta^\alpha_{n-1}\>$. The target of the $d^{n-r}$ differential is $A/\phi^r(\xi)^{d_r}\<\Sigma^{-1} \vartheta^\alpha_r\>$ if $d_r>0$, and $0$ if $d_r\le0$. When $d_r>0$, the kernel of $d^r$ is $(\phi^r(\xi)^{d_r})$ by transversality, so the term $a_r^{-d_r}$ gets replaced with $u_r^{d_r}$ on the $E^{n-r+1}$ page.

\item Region 2: $\TF_{\alpha-1}(R;\Z_p)$, $d_\infty(\alpha)<0$.

There are no differentials, so all we need to do is determine the extensions. Let $s<r$ such that $d_s>0$ and $d_r>0$. In the HOTFSS, this will give

\begin{center}
\begin{tabular}{c|cc}
  $n-s$ & $A/\phi^s(\xi)^{d_s}\<\Sigma^{-1} \vartheta^\alpha_s\>$\\
  $\vdots$& $\vdots$\\
  $n-r$ & $A/\phi^r(\xi)^{d_r}\<\Sigma^{-1} \vartheta^\alpha_r\>$\\\hline
  $E^\infty$ & $-1$
\end{tabular}
\end{center}

If there were a nontrivial extension here, it would be
\[ A/\phi^s(\xi)^{d_s}\phi^r(\xi)^{d_r}\<\Sigma^{-1} \vartheta^\alpha_r\>. \]
In order for the names to match up, we must have $d_i=0$ for $s<i<r$.

\item Region 3: $\TF_{\alpha-1}(R;\Z_p)$, $d_\infty(\alpha)\ge0$.

First we determine the $E^\infty$ page. Suppose that $d_r>0$. By the discussion for Region (1), the $E^{n-r}$ page of the HOTFSS looks like

\begin{center}
\begin{tabular}{c|cc}
  $n-r$ & $A/\phi^r(\xi)^{d_r}\<\Sigma^{-1} a_0^{-d_0} \dotsm a_r^{-d_r} u_{r+1}^{d_{r+1}} \dotsm\>$\\
  $\vdots$ & $\vdots$\\
  0 & & $A\<a_0^{-d_0} \dotsm a_r^{-d_r} g_{r+1} \dotsm \>$\\\hline
  $E^{n-r}$ & $-1$ & $0$
\end{tabular}
\end{center}
where $g_i=u_i^{d_i}$ if $d_i\ge0$, and $g_i=a_i^{-d_i}$ if $d_i<0$.

We see that the image of $d^{n-r}$ is
\[
  f^\alpha_r \defeq\prod_{i>r,\,d_i<0} \phi^{i}(\xi)^{-d_i}
\]
so we get
\[ A/(\phi^r(\xi)^{d_r},f^\alpha_r)\<\Sigma^{-1} \vartheta^\alpha_r\> \]
on the $E^\infty$ page. The discussion of the extensions is then the same as in Region 2. \qedhere
\end{enumerate}
\end{proof}

We give several examples of the HOTFSS below. Extensions on the $E^\infty$ page are indicated with braces.


\begin{figure}[h]
  \begin{center}
    \begin{tabular}{c|cc||c|cc}
2 & $A/\xi\<\Sigma^{-1} a_0^{-1} a_1\>$ & & 2 & $A/(\xi,\phi(\xi))\<\Sigma^{-1} a_0^{-1} a_1\>$\\
1 & & & 1 & \\
0 & & $A\<a_0^{-1} a_1\>$ & 0 & & $A\<u_0 a_1\>$\\\hline
$E^1$ & $-1$ & $0$ & $E^\infty$ & $-1$ & $0$
\end{tabular}

  \end{center}
  \caption{HOTFSS for $\alpha=(1,-1;0)$}
\end{figure}

\begin{figure}[h]
  \begin{center}
    \begin{tabular}{c|cc}
  6 & $A/\xi^{3}\<\Sigma^{-1}a_{0}^{-3}u_{1}^{}u_{2}^{-1}u_{3}^{}u_{5}^{2}u_{\lambda_5}^{-1}\>$ & \rdelim\}{2}{16em}[$\dfrac A{\xi^3\phi(\xi)}\<\Sigma^{-1}a_{0}^{-3}a_{1}^{-1}u_{2}^{-1}u_{3}^{}u_{5}^{2} u_{\lambda_5}^{-1}\>$] \\
  5 & $A/\phi^{}(\xi)^{}\<\Sigma^{-1}a_{0}^{-3}a_{1}^{-1}u_{2}^{-1}u_{3}^{}u_{5}^{2} u_{\lambda_5}^{-1}\>$ & \\
  4 &\\
  3 & $A/\phi^{3}(\xi)^{}\<\Sigma^{-1}a_{0}^{-3}a_{1}^{-1}a_{2}^{}a_{3}^{-1}u_{5}^{2}u_{\lambda_5}^{-1}\>$ & \rdelim\}{3}{16em}[$\dfrac A{\phi^3(\xi)\phi^5(\xi)^2}\<\Sigma^{-1}a_{0}^{-3}a_{1}^{-1}a_{2}^{}a_{3}^{-1}a_{5}^{-2}u_{\lambda_5}^{-1}\>$]\\
  2 & \\
  1 & $A/\phi^{5}(\xi)^{2}\<\Sigma^{-1}a_{0}^{-3}a_{1}^{-1}a_{2}^{}a_{3}^{-1}a_{5}^{-2}u_{\lambda_5}^{-1}\>$\\
  0 & \\\hline
  $E^\infty$ & $-1$ & $0$
\end{tabular}

  \end{center}
  \caption{HOTFSS for $\alpha=(3,1,-1,1,0,2;-1)$}
\end{figure}

\begin{figure}[h]
  \begin{center}
    \begin{tabular}{c|cc}
  4 & $A/\xi^{}\<\Sigma^{-1}a_{0}^{-1}u_{1}^{2}u_{2}^{-1}u_{3}^{3}\>$\\
  3 & $A/\phi^{}(\xi)^{2}\<\Sigma^{-1}a_{0}^{-1}a_{1}^{-2}u_{2}^{-1}u_{3}^{3}\>$\\
  2 & \\
  1 & $A/\phi^{3}(\xi)^{3}\<\Sigma^{-1}a_{0}^{-1}a_{1}^{-2}a_{2}^{}a_{3}^{-3}\>$\\
  0 & & $A\<a_{0}^{-1}a_{1}^{-2}a_{2}^{}a_{3}^{-3}\>$\\\hline
  $E^1$ & $-1$ & 0\\\hline\hline
  4 & $A/(\xi,\phi^2(\xi))\<\Sigma^{-1}a_{0}^{-1}u_{1}^{2}u_{2}^{-1}u_{3}^{3}\>$ & \rdelim\}{2}{15em}[$\dfrac A{\xi\phi(\xi)^2,\phi^2(\xi)}\<\Sigma^{-1}a_{0}^{-1}a_{1}^{-2}u_{2}^{-1}u_{3}^{3}\>$]\\
  3 & $A/(\phi(\xi)^2,\phi^2(\xi))\<\Sigma^{-1}a_{0}^{-1}a_{1}^{-2}u_{2}^{-1}u_{3}^{3}\>$\\
  2 & \\
  1 & \\
  0 & & $A\<u_0 u_1^2 a_{2} u_3^3\>$\\\hline
  $E^\infty$ & $-1$ & 0
\end{tabular}

  \end{center}
  \caption{HOTFSS for $\alpha=(1,2,-1,3;0)$}
\end{figure}

\section{\tops{$\TF_\rog$}{TF} for crystalline perfect prisms}
\label{sec:crys}

In this section we analyze the HOTFSS for crystalline perfect prisms. Let $\alpha=(d_0,\dotsc,d_{n-1};d_\infty)$.

The complication compared to the transversal case is that we now have the relation
\[ a_i^{-1} a_j = u_i u_j^{-1}. \]
This implies more generally that
\[ a_{i_1}^{-k_1}\dotsm a_{i_\ell}^{-k_\ell} = u_{i_1}^{k_1} \dotsm u_{i_\ell}^{k_\ell} \]
whenever $\sum k_j=0$. Moreover, for $s<r$ we have
\[ \vartheta_r^\alpha = p^{\sum_{s<i\le r} (-d_i)} \vartheta_s^\alpha. \]

\begin{proof}[Proof of Theorem \ref{thm:rog-crys}]
\hfill
\begin{enumerate}
\item Region 1: $\TF_\alpha(k)$, $d_\infty(\alpha)\ge0$.

  On the $E^1$ page, we start out with $A\<\vartheta^\alpha_{n-1}\>$. The target of the $d^{n-r}$ differential is $A/p^{d_r}\<\Sigma^{-1}\vartheta^\alpha_r\>$ if $d_r>0$, and $0$ if $d_r\le0$.

  Define $e_r, s_r$ inductively (starting from $r=n-1$) by
  \[
    e_r = \min\{d_r,s_r\},
    \qquad
    s_r = \sum_{i>r} (-e_i).
  \]
  Note that $s_r\ge0$ for all $r$, with equality if and only if $d_{r+1}\ge s_{r+1}$ (or $r=n-1$).

  Supposing $d_r>0$, we claim inductively that the image of $d^{n-r}$ is $(p^{s_r})$. The induction step is clear if $d_r<s_r$. Otherwise, the kernel of $d^{n-r}$ is $(p^{d_r-s_r})$. We can then rewrite
  \begin{align*}
    p^{d_r-s_r} a_r^{-d_r} a_{r+1}^{-d_{r+1}} a_{r+2}^{-d_{r+2}} \dotsm
    &= u_r^{d_r-s_r} (a_r^{-1} a_{r+1})^{-d_{r+1}} (a_r^{-1} a_{r+2})^{-d_{r+2}} \dotsm\\
    &= u_r^{d_r-s_r} (a_r^{-1} a_{r+1})^{s_r} (a_{r+1}^{-1} a_{r+2})^{s_{r+1}} \dotsm
  \end{align*}
  This completes the induction step, and also shows that the generator of $\TF_\alpha(k)$ has the desired form.

\item Region 2: $\TF_{\alpha-1}(k)$, $d_\infty(\alpha)<0$.

There are no differentials, so all we need to do is determine the extensions. Let $s<r$ such that $d_s>0$ and $d_r>0$. In the HOTFSS, this will give

\begin{center}
\begin{tabular}{c|cc}
  $n-s$ & $A/p^{d_s}\<\Sigma^{-1} \vartheta^\alpha_s\>$\\
  $\vdots$ & $\vdots$\\
  $n-r$ & $A/p^{d_r}\<\Sigma^{-1} \vartheta^\alpha_r\>$\\\hline
  $E^\infty$ & $-1$
\end{tabular}
\end{center}

If there were a nontrivial extension here, it would be
\[ A/p^{d_s+d_r}\<\Sigma^{-1} \vartheta^\alpha_r\>. \]
In order for the names to match up, we must have $\sum_{s<i<r} d_i=0$.

\item Region 3: $\TF_{\alpha-1}(k)$, $d_\infty(\alpha)\ge0$.

By the discussion for Region (1), the $E^\infty$ page of the HOTFSS looks like

\begin{center}
\begin{tabular}{c|cc}
  $n-s$ & $A/p^{e_s}\<\Sigma^{-1} \vartheta^\alpha_s\>$\\
  $\vdots$ & $\vdots$\\
  $n-r$ & $A/p^{e_r}\<\Sigma^{-1} \vartheta^\alpha_r\>$\\\hline
  $E^\infty$ & $-1$
\end{tabular}
\end{center}

If there were a nontrivial extension here, it would be
\[ A/p^{e_s+e_r}\<\Sigma^{-1} \vartheta^\alpha_r\>. \]
In order for the names to match up, we must have $e_r = \sum_{s<i\le r} d_i$. \qedhere
\end{enumerate}
\end{proof}

We give several examples of the HOTFSS below. Extensions on the $E^\infty$ page are indicated by lines.

\begin{figure}[h]
  \begin{center}
    \begin{tabular}{c|cc||c|cc}
2 & $A/p\<\Sigma^{-1} a_0^{-1} u_1^{-1}\>$ && 2 & $A/p\<\Sigma^{-1} a_0^{-1} u_1^{-1}\>$\\
1 && & 1 & \\
0 & & $A\<a_0^{-1} a_1\>$ & 0 & & $A\<a_0^{-1} a_1\>$\\\hline
$E^1$ & $-1$ & $0$ & $E^\infty$ & $-1$ & $0$
\end{tabular}

  \end{center}
  \caption{HOTFSS for $\alpha=(1,-1;0)$}
\end{figure}

\begin{figure}[h]
  \begin{center}
    \begin{tikzpicture}
\draw (0.3, 2.97) node[anchor=west] {$A/p\<\Sigma^{-1} a_{0}^{-1}u_{1}^{-1}u_{2}^{2}u_{3}^{-1}u_{4}^{}u_{\lambda_{4}}^{-1}\>$};
\draw (0.3, 1.97) node[anchor=west] {$A/p^2\<\Sigma^{-1} a_{0}^{-1}a_{1}^{}a_{2}^{-2}u_{3}^{-1}u_{4}^{}u_{\lambda_{4}}^{-1}\>$};
\draw (0.3, 0.97) node[anchor=west] {$A/p\<\Sigma^{-1} a_{0}^{-1}a_{1}^{}a_{2}^{-2}a_{3}^{}a_{4}^{-1}u_{\lambda_{4}}^{-1}\>$};
\draw (6, 2.95) node[anchor=west] {$A/p^2\<\Sigma^{-1} a_{0}^{-1}a_{1}^{}a_{2}^{-2}a_{3}^{}a_{4}^{-1}u_{\lambda_{4}}^{-1}\>$};
\draw (5, 3) -- (6, 3);
\draw (4.9, 1) -- (6, 3);
\foreach \x in {0,...,5}{
  \draw (0, {\x/2+0.5}) node {\x};
}
\draw (0, 0) node {$E^\infty$};
\draw (2.5, 0) node {$-1$};
\draw (6, 0) node {$0$};
\draw (0.3, -0.3) -- (0.3, 3.2);
\draw (-0.4, 0.25) -- (8, 0.25);
\end{tikzpicture}
  \end{center}
  \caption{HOTFSS for $\alpha=(1,-1,2,-1,1;-1)$}
\end{figure}

\begin{figure}[h]
  \begin{center}
    \begin{tikzpicture}
\draw (0.3, 6.5) node[anchor=west] {$A/p^{}\<\Sigma^{-1} a_{0}^{-1}u_{1}^{-1}u_{2}^{2}u_{3}^{-1}u_{4}^{}\>$};
\draw (0.3, 5.5) node[anchor=west] {$A/p^{2}\<\Sigma^{-1} a_{0}^{-1}a_{1}^{}a_{2}^{-2}u_{3}^{-1}u_{4}^{}\>$};
\draw (0.3, 4.5) node[anchor=west] {$A/p^{}\<\Sigma^{-1} a_{0}^{-1}a_{1}^{}a_{2}^{-2}a_{3}^{}a_{4}^{-1}\>$};
\draw (4.8, 4) node[anchor=west] {$A\<a_{0}^{-1}a_{1}^{}a_{2}^{-2}a_{3}^{}a_{4}^{-1}\>$};
\draw (0.3, 2.97) node[anchor=west] {$A/p^{}\<\Sigma^{-1} a_{0}^{-1}u_{1}^{-1}u_{2}^{2}u_{3}^{-1}u_{4}^{}\>$};
\draw (0.3, 1.97) node[anchor=west] {$A/p^{}\<\Sigma^{-1} a_{0}^{-1}a_{1}^{}a_{2}^{-2}u_{3}^{-1}u_{4}^{}\>$};
\draw (4.8, 0.5) node[anchor=west] {$A\<a_{0}^{-1}a_{1}^{}u_2 a_{2}^{-1}a_{3}^{}u_{4}\>$};
\draw (6, 2.97) node[anchor=west] {$A/p^2\<\Sigma^{-1} a_{0}^{-1}u_{1}^{-1}u_{2}^{2}u_{3}^{-1}u_{4}^{}\>$};
\draw (4.5, 2.97) -- (6, 2.97);
\draw (4.5, 1.97) -- (6, 2.97);
\foreach \x in {0,...,5}{
  \draw (0, {\x/2+4}) node {\x};
  \draw (0, {\x/2+0.5}) node {\x};
}
\draw (0, 3.5) node {$E^1$};
\draw (0, 0) node {$E^\infty$};
\draw (2.5, 0) node {$-1$};
\draw (6.4, 0) node {$0$};
\draw (2.5, 3.5) node {$-1$};
\draw (6.4, 3.5) node {$0$};
\draw (0.3, -0.3) -- (0.3, 6.8);
\draw (-0.4, 0.25) -- (8, 0.25);
\draw (-0.4, 3.2) -- (8, 3.2);
\draw (-0.4, 3.3) -- (8, 3.3);
\draw (-0.4, 3.7) -- (8, 3.7);
\end{tikzpicture}
  \end{center}
  \caption{HOTFSS for $\alpha=(1,-1,2,-1,1;0)$}
\end{figure}

\include{mackey}

\section{Contradiction}
\label{sec:fail}

In this section we record the contradiction mentioned in \sec\ref{sub:caveat}, which we have not yet been able to resolve. Let $A$ be a crystalline perfect prism with quotient $k=A/p$. Note that for $s<r$, the gold relation implies
\[ \vartheta^\alpha_s = \vartheta^\alpha_r \cdot p^{\sum_{s<i\le r} d_i}, \]
which is bad because now things can have the same name.

Let $\alpha=(d_0,-1,2,-1,d_4;-1)$ with $d_0,d_4>0$, and let $\beta=(d_0,-1,2,-2,d_4;-1)$. The HOTFSS for $\alpha$ and for $\beta$ looks like:

\begin{center}
\vspace{.5em}
\begin{tabular}{c|cc}
  5 & $A/p^{d_0}\<\Sigma^{-1} \vartheta^\alpha_0\>$\\
  4 & \\
  3 & $A/p^2\<\Sigma^{-1} \vartheta^\alpha_2\>$\\
  2 & \\
  1 & $A/p^{d_4}\<\Sigma^{-1} \vartheta^\alpha_4\>$\\
  0 & \\\hline
  $E^\infty(\alpha)$ & $-1$ & 0
\end{tabular}
\qquad
\begin{tabular}{c|cc}
  5 & $A/p^{d_0}\<\Sigma^{-1} \vartheta^\beta_0\>$\\
  4 & \\
  3 & $A/p^2\<\Sigma^{-1} \vartheta^\beta_2\>$\\
  2 & \\
  1 & $A/p^{d_4}\<\Sigma^{-1} \vartheta^\beta_4\>$\\
  0 & \\\hline
  $E^\infty(\beta)$ & $-1$ & 0
\end{tabular}
\vspace{.5em}
\end{center}
which written out in full (other than the $u_{\lambda_4}^{-1}$, which doesn't really matter) is
\begin{center}
\vspace{.5em}
\begin{tabular}{c|cc}
  5 & $A/p^{d_0}\<\Sigma^{-1} a_0^{-d_0} u_1^{-1} u_2^2 u_3^{-1} u_4^{d_4}\>$\\
  4 & \\
  3 & $A/p^2\<\Sigma^{-1} a_0^{-d_0} a_1 a_2^{-2} u_3^{-1} u_4^{d_4}\>$\\
  2 & \\
  1 & $A/p^{d_4}\<\Sigma^{-1} a_0^{-d_0} a_1 a_2^{-2} a_3 a_4^{-d_4}\>$\\
  0 & \\\hline
  $E^\infty(\alpha)$ & $-1$ & 0
\end{tabular}
\qquad
\begin{tabular}{c|cc}
  5 & $A/p^{d_0}\<\Sigma^{-1} a_0^{-d_0} u_1^{-1} u_2^2 u_3^{-2} u_4^{d_4}\>$\\
  4 & \\
  3 & $A/p^2\<\Sigma^{-1} a_0^{-d_0} a_1 a_2^{-2} u_3^{-2} u_4^{d_4}\>$\\
  2 & \\
  1 & $A/p^{d_4}\<\Sigma^{-1} a_0^{-d_0} a_1 a_2^{-2} a_3^2 a_4^{-d_4}\>$\\
  0 & \\\hline
  $E^\infty(\beta)$ & $-1$ & 0
\end{tabular}
\vspace{1em}
\end{center}

Note that $u_1^{-1} u_2^2 u_3^{-1} = a_1 a_2^{-2} a_3$. So by tracking names, the extensions should be as follows:
\begin{align*}
  \TF_{\alpha-1}(k) &=
  \phantom\oplus A/p^2\<\Sigma^{-1} \vartheta^\alpha_2\>
  &
  \TF_{\beta-1}(k) &=
  \phantom\oplus A/p^{d_0}\<\Sigma^{-1} \vartheta^\beta_0\>\\
  &\phantom= \oplus A/p^{d_0+d_4}\<\Sigma^{-1} \vartheta^\alpha_4\> &
  &\phantom= \oplus A/p^2\<\Sigma^{-1} \vartheta^\beta_2\>\\
  &&&\phantom= \oplus A/p^{d_4}\<\Sigma^{-1} \vartheta^\beta_4\>
\end{align*}

We would like to determine the multiplication maps $\xymatrix@1{\TF_{\alpha-1}(k)\ar@<.5ex>[r]^-{a_3} & \ar@<.5ex>[l]^-{u_3} \TF_{\beta-1}(k)}$. Note that
\begin{itemize}
\item $\vartheta^\alpha_2 = \vartheta^\alpha_4$ if $d_4=1$\vspace{.5em}

\item $\vartheta^\beta_2 = \vartheta^\beta_4$ if $d_4=2$\vspace{.5em}
\end{itemize}
so in these cases there is the possibility of an exotic multiplication.

But there's a much bigger problem: basic linear algebra forbids having a $3\times 2$ matrix $A$ and a $2\times 3$ matrix $U$ such that
\[ AU=\begin{bmatrix}p&0&0\\0&p&0\\0&0&p\end{bmatrix},\qquad UA = \begin{bmatrix}p & 0\\0 &p\end{bmatrix}  \]
unless $d_0=1$ and/or $d_4=1$. So the gold relation seems to prohibit extensions with different numbers of summands if the summands are bigger than $A/p$. But such extensions do occur.

\bibliographystyle{amsalpha}
\bibliography{../bibliography}

\end{document}